\documentclass[11pt]{article}

\usepackage[margin=1.15in]{geometry}
\usepackage{amsmath,amssymb,amsthm,mathtools}
\usepackage{microtype}
\usepackage[numbers,sort&compress]{natbib}
\usepackage[colorlinks=true,allcolors=blue]{hyperref}
\hypersetup{
  pdftitle={Asymptotically Lossless Exact and Pivotal E-Values},
  pdfauthor={Hengzhi He and Guang Cheng}
}

\newtheorem{theorem}{Theorem}[section]
\newtheorem{proposition}[theorem]{Proposition}
\newtheorem{lemma}[theorem]{Lemma}

\theoremstyle{definition}
\newtheorem{definition}[theorem]{Definition}
\theoremstyle{remark}
\newtheorem{remark}[theorem]{Remark}

\newcommand{\cP}{\mathcal P}
\newcommand{\cX}{\mathcal X}
\newcommand{\cY}{\mathcal Y}
\newcommand{\E}{\mathbb E}
\newcommand{\R}{\mathbb R}
\newcommand{\law}{\mathcal L}
\newcommand{\one}{\mathbf 1}

\title{Asymptotically Lossless Exact and Pivotal E-Values}
\author{Hengzhi He\thanks{Department of Statistics and Data Science, University of California, Los Angeles.}
\and Guang Cheng\footnotemark[1]}
\date{August 2026}

\begin{document}
\maketitle

\begin{abstract}
Consider testing a finite composite null $\cP={P_1,\ldots,P_L}$ against a simple alternative $Q$ using $n$ i.i.d. observations. Let $\ell_n$ denote the supremum of the expected log e-value over e-variables that are exact under every $P_i$ and pivotal across the nulls. \citet{zhang2024existence} showed that $\ell_n$ is superadditive and asked whether $\ell_n/n$ converges to the upper bound $\min_i D(Q|P_i)$ under their joint atomlessness and absolute-continuity assumptions. We prove that it does. The proof draws on techniques developed by \citet{zhang2024existence} and by \citet{farooq2024matrix}.
\end{abstract}

\section{Introduction}

Let $P_1,\ldots,P_L$ be a finite collection of null distributions and let
$Q$ be a simple alternative.  A nonnegative statistic $E$ is an e-variable
for the null if $\E_{P_i}E\leq 1$ for every $i$ \citet{vovk2021values}.  Two additional properties
are natural when the null is composite \citet{zhang2024existence}.  Exactness requires
$\E_{P_i}E=1$ for every $i$, while pivotality requires the law of $E$ to be
the same under all $P_i$.  Note that these conditions are substantially stronger than
ordinary e-validity at a fixed sample size.

\citet{zhang2024existence} studied the existence and
optimal power of such statistics.  For $n$ i.i.d. observations, let
$\ell_n$ denote the supremum of $\E_{Q^{\otimes n}}\log E$ over all exact and
pivotal e-variables based on these observations.  Under their joint atomlessness and absolute-continuity
assumptions, and when $Q\notin\cP$, their Proposition~4.11 establishes
superadditivity and the upper bound
\[
  \lim_{n\to\infty}\frac{\ell_n}{n}
  \leq \min_{1\leq i\leq L}D(Q\|P_i),
\]
and asks whether equality always holds; see also their Gaussian example in
which the bound is attained \cite[Example~4.13]{zhang2024existence}. 

We prove a stronger result: whenever $Q\notin\cP$, the equality holds assuming only joint atomlessness of $(P_1,\ldots,P_L,Q)$; in particular, the absolute-continuity condition $P_i\ll Q$ imposed in Proposition~4.11 of \citet{zhang2024existence} is unnecessary.

The remainder of the paper is organized as follows. Section~2 introduces the setting, defines joint atomlessness and states the main result. Section~3 recalls the exact large-sample matrix-majorization theorem used in the proof. Section~4 constructs a suitable finite target experiment and verifies the conditions of that theorem. Section~5 extends the finite construction to the general setting and completes the proof.

\section{Setting and main result}

Let $\cX$ be a Polish space equipped with its Borel $\sigma$-field.  Throughout,
all logarithms are natural, and relative entropy is allowed to take the value
$+\infty$:
\[
  D(R\|S)=
  \begin{cases}
    \displaystyle\int \log\!\left(\frac{dR}{dS}\right)dR,
      & R\ll S,\\[1ex]
    +\infty,&\text{otherwise}.
  \end{cases}
\]

Fix $L\geq1$ and probability measures
$\cP=\{P_1,\ldots,P_L\}$ and $Q$ on $\cX$.
Repeated copies of the same null distribution are immaterial and will be
deleted.  We assume that $Q\notin\cP$.

For a measurable $E:\cX^n\to[0,\infty)$, call $E$ \emph{exact} if
\[
  \E_{P_i^{\otimes n}}E=1,\qquad i=1,\ldots,L,
\]
and \emph{pivotal} if
\[
  \law_{P_1^{\otimes n}}(E)=\cdots=
  \law_{P_L^{\otimes n}}(E).
\]
We use the convention $\log0=-\infty$.  An extended expectation below is said
to be defined when at least one of the expectations of the positive and
negative parts is finite.  Define the optimal exact-pivotal
e-power
\begin{equation}\label{eq:ell-definition}
  \ell_n:=\sup\left\{
    \E_{Q^{\otimes n}}\log E:
    E\text{ is exact and pivotal, and the extended expectation is defined}
  \right\}.
\end{equation}
We use a supremum so that no attainment assertion is needed.  The constant
e-variable $E\equiv1$ shows that $\ell_n\geq0$.

We recall the joint atomlessness condition from
\cite[Definition~2.2]{zhang2024existence}.

\begin{definition}[Joint atomlessness]\label{def:ja}
A finite tuple $(S_1,\ldots,S_d)$ of probability measures on $\cX$ is
\emph{jointly atomless} if there exist a probability measure $\mu$ dominating
every $S_j$ and a real-valued random variable $\xi$ such that, under $\mu$,
$\xi$ is atomless and independent of
\[
  \left(\frac{dS_1}{d\mu},\ldots,\frac{dS_d}{d\mu}\right).
\]
\end{definition}

The following Theorem is the main result.

\begin{theorem}
\label{thm:main}
Suppose that $(P_1,\ldots,P_L,Q)$ is jointly atomless and that
$Q\notin\{P_1,\ldots,P_L\}$.  Then, in the extended interval $[0,\infty]$,
\begin{equation}\label{eq:main-limit}
  \lim_{n\to\infty}\frac{\ell_n}{n}
  =\min_{1\leq i\leq L}D(Q\|P_i).
\end{equation}
In particular, the conclusion holds under the joint atomlessness and
absolute-continuity assumptions of Proposition~4.11 in
\cite{zhang2024existence}.
\end{theorem}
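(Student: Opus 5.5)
The upper bound requires no new work. For any exact e-variable $E$ and any $i$, Jensen's inequality together with the data-processing inequality give $\E_{Q^{\otimes n}}\log E\leq D(Q^{\otimes n}\|P_i^{\otimes n})=nD(Q\|P_i)$. The way to see this is to compare $Q^{\otimes n}\circ E^{-1}$ with $P_i^{\otimes n}\circ E^{-1}$, using $\E_{P_i}E=1$ and the Gibbs inequality. This bound holds trivially when the divergence is infinite, so $\limsup \ell_n/n\leq\min_i D(Q\|P_i)$. Superadditivity makes $\lim\ell_n/n=\sup_n\ell_n/n$ exist, and it remains to show that for each $\epsilon>0$ some $n$ has $\ell_n/n\geq \min_i D(Q\|P_i)-\epsilon$, or exceeds any $M$ in the infinite case.

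For the lower bound, I first reduce to a finite experiment. I quantize the likelihood ratios $(dP_1/d\mu,\ldots,dP_L/d\mu,dQ/d\mu)$ by a finite measurable partition $\{A_1,\ldots,A_K\}$ of $\cX$. This yields distributions $\bar P_i,\bar Q$ on $\{1,\ldots,K\}$ with $D(\bar Q\|\bar P_i)\geq D(Q\|P_i)-\epsilon$, or $\geq M$; this is possible because relative entropy is the supremum over finite partitions. The partition depends only on the density vector, so the auxiliary atomless variable $\xi$ stays independent of the cell index. If $E$ is built from cell indices of $n$ observations plus the $n$ independent $\xi$'s, it is a measurable function of the data. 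Its laws under $P_i^{\otimes n}$ and $Q^{\otimes n}$ are then those of the corresponding statistic on the finite experiment $(\bar P_i^{\otimes n},\bar Q^{\otimes n})$ augmented with an independent uniform randomizer $U$ built from $\xi_1$. Hence it suffices to construct, for the finite experiment with external randomization, an exact pivotal $E$ with $\E_{\bar Q^{\otimes n}}\log E\geq n(\min_i D(\bar Q\|\bar P_i)-\epsilon)$.

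The finite construction uses the large-sample matrix-majorization theorem of \citet{farooq2024matrix}. I fix a target experiment on two points $\{0,1\}$: every null is sent to the same law $\mathrm{Bern}(a_n)$, and the alternative $Q$ is sent to $\mathrm{Bern}(b_n)$. On this experiment, $E=\one\{1\}/a_n$ is automatically exact and pivotal, because all null images coincide. Its power is $b_n\log(1/a_n)$, plus $-\infty$ on $\{0\}$ unless I instead take $E=c\one\{0\}+d\one\{1\}$ with $c(1-a)+da=1$ and $c>0$. A cleaner target is therefore a binary experiment $(R^{\otimes n}$-type$)$: nulls $\mapsto R_0^{\otimes m}$ and alternative $\mapsto R_1^{\otimes m}$ with $D(R_1\|R_0)$ slightly less than $\min_i D(\bar Q\|\bar P_i)$ and $R_0\neq R_1$ having full support. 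Taking $E$ to be the likelihood ratio $dR_1^{\otimes m}/dR_0^{\otimes m}$ gives exactness and pivotality, since it is a function of the common null image, and the log-power is $mD(R_1\|R_0)$. Because a Markov kernel $T$ maps every $\bar P_i^{\otimes n}$ to $R_0^{\otimes m}$ and $\bar Q^{\otimes n}$ to $R_1^{\otimes m}$, the composite $E\circ T$, realized with $U$, is exact and pivotal on the original data.

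The main obstacle is verifying the hypotheses of the matrix-majorization theorem. It asserts that $(\bar P_1^{\otimes n},\ldots,\bar P_L^{\otimes n},\bar Q^{\otimes n})$ majorizes $(R_0^{\otimes n},\ldots,R_0^{\otimes n},R_1^{\otimes n})$ for large $n$ once strict inequalities hold between the relevant Rényi-type monotones; these are multi-distribution Rényi divergences/Hellinger-type quantities of all orders. The target tuple has $L$ identical copies of $R_0$, which puts it on the boundary. Non-degeneracy conditions of that theorem may therefore fail, and strictness must be arranged. I would first try choosing $R_0,R_1$ on a two-point space with $R_1$ very close to a point mass, so that the target's monotones are dominated by $D(R_1\|R_0)$ in the $Q$-direction. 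Among the nulls, the monotones of the source are strictly positive because the $\bar P_i$ are distinct, while those of the target are trivial; distinctness of the $\bar P_i$ can be ensured by refining the partition. I would also check the $D(\bar Q\|\bar P_i)=\infty$ case, where the support conditions must be handled by letting $R_1$ be singular to $R_0$ on part of the space while keeping $\E\log E$ finite but large. Letting $\epsilon\to0$ and using $\ell_n/n\to\sup$ completes the argument.
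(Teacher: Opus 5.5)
\textbf{The gap.} You identify verifying the hypotheses of Theorem~\ref{thm:ffht} as the main obstacle, but you never verify them, and the one concrete heuristic you give works against them. You compare the single-letter source $(\bar P_1,\ldots,\bar P_L,\bar Q)$ with a fixed target $(R_0,\ldots,R_0,R_1)$, so that $n$ copies go to $n$ copies. The hypotheses then include more than $D(R_1\|R_0)<\min_iD(\bar Q\|\bar P_i)$. They also require the reverse condition $D(R_0\|R_1)<\min_iD(\bar P_i\|\bar Q)$, tropical bounds on $D_\infty(R_0\|R_1)$, and the $A_+$ inequalities in which a null coordinate is maximal. All of these have fixed, source-determined right-hand sides.

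Your proposal is to make $R_1=(1-\delta,\delta)$ nearly a point mass, so that $D_\lambda(R_1\|R_0)\le D_\infty(R_1\|R_0)$ stays close to $D(R_1\|R_0)$. This forces $D(R_0\|R_1)\geq(1-a)\log(1/\delta)-O(1)\to\infty$, which violates the reverse condition. So with a fixed single-letter target the forward and reverse requirements compete, and your plan gives no argument that any suitable $(R_0,R_1)$ exists. The identical null copies you worry about are harmless, since $0<D(\bar P_k\|\bar P_j)$ holds strictly.

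\textbf{The missing idea} is the paper's two-scale blocking. Apply the theorem to the block source $\bar{\boldsymbol\mu}^{\otimes b}$ and a single copy of the $b$-dependent target $(F_b,\ldots,F_b,G_b)$, with $F_b=(e^{-bc},1-e^{-bc})$ and $G_b=(1-e^{-\sqrt b},e^{-\sqrt b})$. Only then take $m$ repetitions. The estimates on each side are:
\begin{itemize}
\item \emph{Source.} Every source monotone grows linearly in $b$. It is at least $b\rho_k$ on $A_-$ and in the tropical case, and at least $b\eta$ on $A_+$. Here $\eta>0$ is a \emph{uniform} bound, obtained from continuity of $\Phi_k$ on the compact set $\mathcal C_k$ together with strict H\"older, which uses pairwise distinctness. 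Your pointwise observation that source monotones are positive because the $\bar P_i$ are distinct does not give this uniformity.
\item \emph{Target.} Target monotones are $o(b)$, in fact $O(\sqrt b)$, whenever a null coordinate is maximal or $\alpha$ stays away from $e_d$. In the remaining alternative-dominated directions they are at most $bc<b\min_iD(\bar Q\|\bar P_i)$, via $D_\infty(G_b\|F_b)<bc$ or $D_t(G_b\|F_b)\le D(G_b\|F_b)<bc$.
\end{itemize}
The two requirements therefore live on different scales and cannot collide, while $D(G_b\|F_b)/b\to c$.

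\textbf{What does hold up.} Your upper bound, the Fekete step, and your derandomization are sound. Because your cells are measurable with respect to the density vector, a uniform $U$ built from $\xi(x_1)$ is independent of all cell indices, with the same law under every hypothesis. Any finite-stage kernel can therefore be realized as a deterministic function of the cells and $U$. This is a genuinely more elementary substitute for Lemma~\ref{lem:ja-products}, Lemma~\ref{lem:purification}, and the convex-order characterization.

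Two smaller fixes are needed:
\begin{itemize}
\item You still need full support of the coarsened vectors. Mixing with the uniform law handles this, and it is again a kernel realizable via $U$.
\item The $D=\infty$ case needs no singular target, which the full-support version of the theorem would not allow anyway. Just aim at an arbitrary finite $c<\min_iD(Q\|P_i)$.
\end{itemize}
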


\section{Exact large-sample matrix majorization}
\label{sec:majorization}

We state the finite-experiment result used in the proof.  A finite experiment
is a tuple $\boldsymbol r=(r_1,\ldots,r_d)$ of probability vectors on a common
finite alphabet.  It majorizes another tuple
$\boldsymbol s=(s_1,\ldots,s_d)$ if there is one column-stochastic matrix $K$
such that $Kr_j=s_j$ for every $j$.  A finite experiment has \emph{full
support} if every entry of every one of its probability vectors is strictly
positive.  Tensor powers are taken componentwise, so
$\boldsymbol r^{\otimes m}=(r_1^{\otimes m},\ldots,r_d^{\otimes m})$.

For a Markov kernel $K$ and a probability measure $R$, we write
\[
 (K_{\#}R)(B)=\int K(x,B)\,R(dx)
\]
for the pushforward measure.

For $\alpha=(\alpha_1,\ldots,\alpha_d)$, put
\[
  \alpha_{\max}=\max_j\alpha_j,
  \qquad
  A=\left\{\alpha\in\R^d:\sum_j\alpha_j=1\right\},
\]
and define
\begin{align*}
  A_+&=\{\alpha\in A:\alpha_j\geq0\text{ for every }j\},\\
  A_-&=\bigcup_{k=1}^d
  \{\alpha\in A:\alpha_k\geq1,\ \alpha_j\leq0\text{ for }j\neq k\}.
\end{align*}
Let $e_j$ denotes the $j$th standard basis vector of $\R^d$.
For a full-support tuple $\boldsymbol r$, and
$\alpha\in(A_+\cup A_-)\setminus\{e_1,\ldots,e_d\}$, set (\citet{farooq2024matrix})
\begin{equation}\label{eq:matrix-alpha}
  \mathsf D_\alpha(\boldsymbol r)
  :=\frac{1}{\alpha_{\max}-1}
  \log\sum_x\prod_{j=1}^d r_j(x)^{\alpha_j}.
\end{equation}
For later use, for full-support probability vectors $R,S$, the ordinary
R\'enyi and max divergences are (\citet{6832827})
\[
 D_t(R\|S)=\frac{1}{t-1}\log\sum_xR(x)^tS(x)^{1-t},
 \qquad
 D_\infty(R\|S)=\log\max_x\frac{R(x)}{S(x)}.
\]
The first definition is for $t\in(0,1)\cup(1,\infty)$, with
$D_1(R\|S)=D(R\|S)$ by continuity.
Also for $\beta=(\beta_1,\ldots,\beta_d)\in\R^d$, define
\[
  \beta_{\max}=\max_j\beta_j,
  \qquad B=\left\{\beta\in\R^d:\sum_j\beta_j=0\right\},
\]
\[
  B_-=\bigcup_{k=1}^d
  \{\beta\in B:\beta_k\geq0,\ \beta_j\leq0\text{ for }j\neq k\}.
\]
For $\beta\in B_-\setminus\{0\}$, the tropical divergence (\citet{farooq2024matrix}) is
\begin{equation}\label{eq:matrix-tropical}
  \mathsf D^{\mathbb T}_\beta(\boldsymbol r)
  :=\frac{1}{\beta_{\max}}
  \log\max_x\prod_{j=1}^d r_j(x)^{\beta_j}.
\end{equation}

The following is part of Theorem~19 of \citet{farooq2024matrix}, specialized to
probability vectors with full support.

\begin{theorem}[Exact large-sample majorization]\label{thm:ffht}
Let $\boldsymbol r=(r_1,\ldots,r_d)$ and
$\boldsymbol s=(s_1,\ldots,s_d)$ be full-support finite experiments.  Suppose
that
\begin{align}
 \mathsf D_\alpha(\boldsymbol r)&>
 \mathsf D_\alpha(\boldsymbol s)
 &&\text{for all }\alpha\in(A_+\cup A_-)
                 \setminus\{e_1,\ldots,e_d\},\label{eq:ffht-alpha}\\
 \mathsf D^{\mathbb T}_\beta(\boldsymbol r)&>
 \mathsf D^{\mathbb T}_\beta(\boldsymbol s)
 &&\text{for all }\beta\in B_-\setminus\{0\},\label{eq:ffht-trop}\\
 D(r_k\|r_j)&>D(s_k\|s_j)
 &&\text{for all }k\neq j.\label{eq:ffht-kl}
\end{align}
Then, for every sufficiently large $m$, there is a column-stochastic matrix
$K_m$ such that
\[
  K_m r_j^{\otimes m}=s_j^{\otimes m},
  \qquad j=1,\ldots,d.
\]
\end{theorem}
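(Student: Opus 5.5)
This result is not proved from scratch here; we obtain it by specializing Theorem~19 of \citet{farooq2024matrix}. The plan is to check that, for full-support tuples of probability vectors, the hypotheses of that theorem reduce exactly to \eqref{eq:ffht-alpha}--\eqref{eq:ffht-kl}, and that its conclusion is the existence of $K_m$.

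\textbf{Step 1: general statement and normalization.} Their theorem is stated for tuples of nonnegative vectors, not necessarily normalized or of full support. It has three kinds of hypotheses:
\begin{itemize}
\item a support (or "type") compatibility condition between $\boldsymbol r$ and $\boldsymbol s$;
\item equality of total masses;
\item a family of strict inequalities between multi-parameter Rényi-type quantities, tropical quantities and relative-entropy-type quantities, indexed by the same sets $A_+\cup A_-$ and $B_-$.
\end{itemize}
Under our assumptions every $r_j$ and $s_j$ is a probability vector, so the mass condition $\sum_x r_j(x)=\sum_x s_j(x)=1$ holds automatically. Full support makes every support-matching condition vacuous, because all relevant supports are the full alphabets.

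\textbf{Step 2: conventions.} I will check that the normalizations agree.
\begin{itemize}
\item Their quantity is $\frac{1}{\alpha_{\max}-1}\log\sum_x\prod_j r_j(x)^{\alpha_j}$. With full support, negative exponents cause no division by zero, so it coincides with \eqref{eq:matrix-alpha}.
\item The tropical quantity coincides with \eqref{eq:matrix-tropical} in the same way.
\item Majorization by a single column-stochastic matrix acting on $\boldsymbol r^{\otimes m}$ is the same notion as their statement for $m$-fold tensor powers. Only a transpose convention might need adjustment.
\end{itemize}

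\textbf{Step 3: the vertices $\alpha=e_k$.} At these points the Rényi-type quantity degenerates as $0/0$. Their theorem replaces it by the limiting derivative condition. For $\alpha=e_k+t(e_j-e_k)$ with $t\to0$, differentiating $\log\sum_x r_k(x)^{1-t}r_j(x)^{t}$ at $t=0$ gives $-D(r_k\|r_j)$. This is well defined and finite because of full support. The derivative conditions therefore become exactly \eqref{eq:ffht-kl}, with the required orientation of the strict inequality.

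\textbf{Main obstacle.} The delicate part is this bookkeeping at the boundary of the parameter region. I must confirm three things:
\begin{itemize}
\item Theorem~19 requires the strict inequalities only on the open set $(A_+\cup A_-)\setminus\{e_1,\ldots,e_d\}$ together with the vertex derivative conditions, and not some extra condition on, say, the faces of $A_+$.
\item The sign conventions ($\alpha_{\max}-1$ versus $1-\alpha_{\max}$, and $\beta_{\max}$) do not reverse any inequality.
\item "Sufficiently large $m$" in their statement is the exact, non-approximate majorization conclusion, and not an asymptotic or catalytic variant stated elsewhere in that theorem.
\end{itemize}
To settle these, I would compare against the case $d=2$, where the conditions should reduce to the known Rényi-divergence characterization of large-sample majorization of dichotomies.
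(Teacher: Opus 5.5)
Your proposal takes the same route as the paper, which gives no independent argument and simply cites this as part of Theorem~19 of \citet{farooq2024matrix}, specialized to full-support probability vectors. Your checks that full support and normalization make the side conditions vacuous, that the normalizations of $\mathsf D_\alpha$ and $\mathsf D^{\mathbb T}_\beta$ agree, and that the KL inequalities correspond to the vertex limits $\alpha\to e_k$ are extra bookkeeping the paper leaves implicit, and they are consistent with its definitions.
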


Theorem~\ref{thm:ffht} is the key ingredient in the proof of our main result.

\section{The binary target experiment}
\label{sec:binary}

Let $\mathcal A$ be finite and let
\[
  \boldsymbol\mu=(\mu_1,\ldots,\mu_d)
\]
be a tuple of pairwise distinct, full-support probability vectors, where
$d=L+1$ and $\mu_d=q$ is the alternative.  Write
\begin{equation}\label{eq:rho-k}
  \rho_k:=\min_{j\neq k}D(\mu_k\|\mu_j)>0,
  \qquad
  r_0:=\rho_d=\min_{1\leq i\leq L}D(q\|\mu_i).
\end{equation}

Fix $c\in(0,r_0)$, and choose any positive sequence $(s_b)_{b\geq1}$ such that
\begin{equation}\label{eq:two-scales}
  s_b\longrightarrow\infty,
  \qquad \frac{s_b}{b}\longrightarrow0.
\end{equation}
For example, one may take $s_b=\sqrt b$.  Put
\begin{equation}\label{eq:binary-laws}
  a_b=e^{-bc},\qquad \delta_b=e^{-s_b},
  \qquad
  F_b=(a_b,1-a_b),\qquad
  G_b=(1-\delta_b,\delta_b),
\end{equation}
and define the target tuple
\[
  \boldsymbol R_b=(\underbrace{F_b,\ldots,F_b}_{L\text{ copies}},G_b).
\]

\begin{lemma}[Binary divergence bounds]\label{lem:binary-bounds}
For all sufficiently large $b$,
\begin{align}
 D_\infty(G_b\|F_b)&=bc+\log(1-\delta_b)<bc,
 \label{eq:forward-dinf}\\
 D_\infty(F_b\|G_b)&=s_b+\log(1-a_b)<s_b.
 \label{eq:reverse-dinf}
\end{align}
Consequently,
\[
  D(G_b\|F_b)<bc,
  \qquad D(F_b\|G_b)<s_b,
\]
and
\begin{equation}\label{eq:binary-rate}
  \frac{1}{b}D(G_b\|F_b)\longrightarrow c.
\end{equation}
\end{lemma}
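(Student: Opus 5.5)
The plan is to compute both max divergences directly from their two-point definitions, then pass to relative entropy through the monotonicity of Rényi divergences in the order, and finally get the rate from an explicit formula for $D(G_b\|F_b)$. Throughout, take $b$ large enough that $a_b<1/2$ and $\delta_b<1/2$. Since $s_b\to\infty$ and $bc\to\infty$, both $a_b$ and $\delta_b$ tend to $0$, so this is a legitimate restriction.

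For \eqref{eq:forward-dinf}, the ratios $G_b(x)/F_b(x)$ are $(1-\delta_b)/a_b=(1-\delta_b)e^{bc}$ at the first point and $\delta_b/(1-a_b)$ at the second. The first ratio exceeds $1$ (it is at least $e^{bc}/2$, and $bc$ is large), while the second is below $1$ because $\delta_b<1/2<1-a_b$. Hence the maximum is $(1-\delta_b)e^{bc}$, and its logarithm is $bc+\log(1-\delta_b)$, which is strictly less than $bc$ because $\delta_b>0$. Symmetrically, for \eqref{eq:reverse-dinf} the ratios $F_b(x)/G_b(x)$ are $a_b/(1-\delta_b)<1$ and $(1-a_b)/\delta_b=(1-a_b)e^{s_b}>1$. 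This gives $s_b+\log(1-a_b)<s_b$.

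To deduce the relative-entropy bounds, I will use the standard fact that $D(R\|S)\le D_\infty(R\|S)$ for full-support vectors. The one-line justification is
\[
D(R\|S)=\sum_x R(x)\log\frac{R(x)}{S(x)}\le \log\max_x\frac{R(x)}{S(x)},
\]
since each summand's logarithm is at most the maximum and $\sum_x R(x)=1$.

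For \eqref{eq:binary-rate}, I will write out
\[
D(G_b\|F_b)=(1-\delta_b)\log\frac{1-\delta_b}{e^{-bc}}+\delta_b\log\frac{\delta_b}{1-a_b}.
\]
Dividing by $b$, the first term equals $(1-\delta_b)\bigl(c+b^{-1}\log(1-\delta_b)\bigr)$, which tends to $c$. The second term is bounded in absolute value by $\delta_b s_b+\delta_b|\log(1-a_b)|$. This is $e^{-s_b}s_b+o(1)$, so it tends to $0$, and dividing by $b$ only helps.

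There is no real obstacle here. The only care needed is to fix the threshold on $b$ so that both ratio comparisons hold, and to note that the upper bound $bc$ together with the matching lower behaviour pins the limit exactly at $c$.
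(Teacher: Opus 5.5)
Your proof is correct and follows the paper's argument essentially step for step. You locate the maximizing atom of each likelihood ratio; the paper does this via the single condition $a_b+\delta_b<1$, which your choice $a_b,\delta_b<1/2$ implies. You then pass to relative entropy through $D\le D_\infty$, and read off the rate from the explicit two-term formula for $D(G_b\|F_b)$. One small difference: your bound $\delta_b s_b=s_b e^{-s_b}\to0$ on the second term shows it vanishes even before dividing by $b$, whereas the paper simply invokes $s_b=o(b)$.
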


\begin{proof}
For all sufficiently large $b$, $a_b+\delta_b<1$.  This inequality is
equivalent to
\[
 \frac{1-\delta_b}{a_b}>\frac{\delta_b}{1-a_b}
 \quad\text{and}\quad
 \frac{1-a_b}{\delta_b}>\frac{a_b}{1-\delta_b}.
\]
Thus $G_b/F_b$ is maximized at the first atom and the reverse likelihood
ratio is maximized at the second atom, giving
\eqref{eq:forward-dinf} and \eqref{eq:reverse-dinf}.  The KL bounds follow
from $D\leq D_\infty$.  Finally,
\begin{align*}
 D(G_b\|F_b)
 ={}&(1-\delta_b)\{bc+\log(1-\delta_b)\}\\
 &+\delta_b\{-s_b-\log(1-a_b)\}.
\end{align*}
Since $\delta_b\to0$, $s_b=o(b)$ and $a_b\to0$, division by $b$ proves
\eqref{eq:binary-rate}.
\end{proof}

We now verify the complete spectral criterion of Theorem~\ref{thm:ffht}.

\begin{proposition}[Exact degradation to the binary experiment]
\label{prop:finite-conversion}
For the fixed $c$ and sequence $(s_b)$ satisfying
\eqref{eq:two-scales}, there exists $b_0$ such that, for every $b\geq b_0$,
the source experiment
$\boldsymbol\mu^{\otimes b}$ and target experiment $\boldsymbol R_b$ satisfy
all strict inequalities
\eqref{eq:ffht-alpha}--\eqref{eq:ffht-kl}.  Consequently, for each such $b$
there exists $m_0(b)$ such that, for every $m\geq m_0(b)$, one Markov kernel
$K_{b,m}:\mathcal A^{bm}\rightsquigarrow\{0,1\}^m$ satisfies
\begin{equation}\label{eq:finite-conversion}
  (K_{b,m})_{\#}\mu_i^{\otimes bm}=F_b^{\otimes m}
  \quad(1\leq i\leq L),
  \qquad
  (K_{b,m})_{\#}q^{\otimes bm}=G_b^{\otimes m}.
\end{equation}
\end{proposition}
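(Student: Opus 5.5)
The plan is to check the three families of strict inequalities in Theorem~\ref{thm:ffht} directly, with $\boldsymbol r=\boldsymbol\mu^{\otimes b}$ and $\boldsymbol s=\boldsymbol R_b$. Then \eqref{eq:finite-conversion} follows by applying Theorem~\ref{thm:ffht} to this pair, since both experiments have full support ($a_b,\delta_b\in(0,1)$). On the source side, all three quantities $\mathsf D_\alpha$, $\mathsf D^{\mathbb T}_\beta$ and $D$ are additive under tensor powers, so
\[
\mathsf D_\alpha(\boldsymbol\mu^{\otimes b})=b\,\mathsf D_\alpha(\boldsymbol\mu),
\qquad
\mathsf D^{\mathbb T}_\beta(\boldsymbol\mu^{\otimes b})=b\,\mathsf D^{\mathbb T}_\beta(\boldsymbol\mu),
\qquad
D(\mu_k^{\otimes b}\|\mu_j^{\otimes b})=b\,D(\mu_k\|\mu_j).
\]
The source therefore grows linearly in $b$ with a fixed slope. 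On the target side, the $L$ null coordinates all equal $F_b$. Hence every target quantity collapses to a function of the pair $(F_b,G_b)$. Using Lemma~\ref{lem:binary-bounds}, I expect each target quantity to be either $<bc$ (in the "alternative" direction) or $\lesssim s_b=o(b)$ (in every other direction). Since $c<r_0$, the inequalities then hold for large $b$.

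\emph{KL and tropical families.} For \eqref{eq:ffht-kl}, the target values are $0$ (between two copies of $F_b$), $D(G_b\|F_b)<bc<b r_0\le bD(q\|\mu_i)$, and $D(F_b\|G_b)<s_b<b\rho_i$ for $b$ large. For \eqref{eq:ffht-trop}, take $\beta\in B_-$ with $\beta_k=\beta_{\max}=\sum_{j\ne k}|\beta_j|$. Bound the source from below by replacing the maximum over $x$ with the $\mu_k$-average of the logarithm:
\[
\mathsf D^{\mathbb T}_\beta(\boldsymbol\mu)\ \ge\ \frac1{\beta_k}\sum_{j\neq k}|\beta_j|\,D(\mu_k\|\mu_j)\ \ge\ \rho_k .
\]
For the target, if $k=d$ the product is $(G_b/F_b)^{\beta_d}$, so the target equals $D_\infty(G_b\|F_b)<bc<br_0$. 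If $k\le L$ the product reduces to $(F_b/G_b)^{|\beta_d|}$, so the target is at most $\frac{|\beta_d|}{\beta_k}\max\{0,D_\infty(F_b\|G_b)\}\le s_b<b\rho_k$.

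\emph{The $\alpha$ family on $A_-$.} Write $t=\alpha_d$. The target sum is $\sum_x F_b(x)^{1-t}G_b(x)^t$. Suppose $\alpha_k\ge1$ and the other coordinates are $\le0$. By Jensen under $\mu_k$, $\log\sum_x\mu_k\prod_{j\ne k}(\mu_j/\mu_k)^{\alpha_j}\ge\sum_{j\ne k}|\alpha_j|D(\mu_k\|\mu_j)$, and $\sum_{j\ne k}|\alpha_j|=\alpha_k-1$. This gives $\mathsf D_\alpha(\boldsymbol\mu)\ge\rho_k$. For the target: if $k=d$ it is $D_t(G_b\|F_b)\le D_\infty(G_b\|F_b)<bc$. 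If $k\le L$ then $t\le0$, and the target sum is at most $a_b(1-\delta_b)^t+\delta_b^{t}\le 1+e^{s_b|t|}$. Dividing by $\alpha_k-1\ge|t|$ gives a bound of order $s_b$ plus a bounded term, which is $o(b)$.

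\emph{The $\alpha$ family on $A_+$, the main obstacle.} Here Jensen points the wrong way, and the difficulty is uniformity near the vertices $e_k$. The target sum is about $e^{-bc(1-t)}+e^{-s_bt}$. Since $1-\alpha_{\max}\le 1-t$ and $1-\alpha_{\max}\le$ the total weight off the maximal coordinate, I aim for the bound
\[
\text{target}\ \le\ \min\Bigl\{\,bc+o(b),\ \tfrac{s_b t}{1-\alpha_{\max}}+O(1)\Bigr\}.
\]
For the source I would prove a lemma: $\mathsf D_\alpha(\boldsymbol\mu)>0$ on $A_+\setminus\{e_j\}$, continuous there, and
\[
\liminf_{\alpha\to e_k}\mathsf D_\alpha(\boldsymbol\mu)\ \ge\ \rho_k .
\]
The liminf bound comes from a first-order expansion of $\log\sum_x\prod_j\mu_j^{\alpha_j}$ at $e_k$. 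That expansion yields $\sum_{j\ne k}\frac{\alpha_j}{1-\alpha_k}D(\mu_k\|\mu_j)$ up to $o(1)$ uniformly in direction, because the alphabet is finite and the vectors have full support. Then split $A_+$ into three regions:
\begin{itemize}
\item Near $e_d$: the source is at least $b(r_0-\epsilon)$, which exceeds $bc+o(b)$.
\item Near $e_i$ with $i\le L$: $t\le 1-\alpha_{\max}$, so the target is at most $s_b+O(1)$, while the source is at least $b(\rho_i-\epsilon)$.
\item On the compact remainder: the source is at least $b\kappa$ for some $\kappa>0$, and the target is at most $s_b/\kappa'+O(1)=o(b)$.
\end{itemize}
Choosing $\epsilon<r_0-c$ and then $b_0$ large completes the verification.
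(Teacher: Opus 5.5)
Your architecture matches the paper's. Additivity makes the source grow like $b$ times a fixed positive slope, the target collapses to the pair $(F_b,G_b)$, and averaging/Jensen gives the $\rho_k$ lower bounds on $A_-$ and in the tropical case. Your vertex liminf lemma on $A_+$ is equivalent to the paper's continuous extension of $\Phi_k$ to $\varepsilon=0$ on the compact set $\mathcal C_k$. The KL and tropical checks are correct.

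The gap is in your target-side bounds near the vertices $e_k$, where $|\alpha_{\max}-1|\to0$. On $A_-$ with $k\le L$ you bound the sum by $1+e^{s_b|t|}$ and divide its logarithm by $\alpha_k-1$. That quotient is at least $\log 2/(\alpha_k-1)$, so it is not a bounded term.

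\begin{itemize}
\item Concretely, take $\alpha=(1+\epsilon)e_k-\epsilon e_d$ with $b$ fixed. Your bound tends to $+\infty$ as $\epsilon\downarrow0$, but $\mathsf D_\alpha(\boldsymbol\mu^{\otimes b})=bD_{1+\epsilon}(\mu_k\|q)$ stays bounded.
\item The failure survives even without your simplification: already $\log(1+a_b)/\epsilon\to\infty$.
\item So your argument does not certify \eqref{eq:ffht-alpha} near $e_k$, yet Theorem~\ref{thm:ffht} needs it for every $\alpha$.
\item Separately, $a_b(1-\delta_b)^t\le 1$ fails for very negative $t$, though that is harmless.
\end{itemize}

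The same defect sits in your $A_+$ sketch. Approximating the sum by $e^{-bc(1-t)}+e^{-s_bt}$ leaves additive errors of order $-\log(1-a_b)/(1-\alpha_{\max})$ near a null vertex and $-t\log(1-\delta_b)/(1-t)$ near $e_d$. Neither is $O(1)$ uniformly, so your claimed bounds $s_b+O(1)$ and $bc+o(b)$ near the vertices are not justified by that route.

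The missing idea concerns $h_b(t)=\log\sum_yF_b(y)^{1-t}G_b(y)^t$. It vanishes at $t=0$ and $t=1$, and you must bound it linearly in the distance to the relevant endpoint, with no additive constant.

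\begin{itemize}
\item For $t=-u\le 0$, use the exact identity $h_b(-u)=uD_{1+u}(F_b\|G_b)\le uD_\infty(F_b\|G_b)<us_b$. The target is then at most $\frac{u}{\alpha_k-1}s_b\le s_b$.
\item For $t\in[0,1]$, convexity of $h_b$ gives the tangent bounds $-h_b(t)\le tD(F_b\|G_b)$ and $-h_b(t)\le (1-t)D(G_b\|F_b)$.
\item Hence on $A_+\setminus\{e_1,\dots,e_d\}$ the target is at most $\frac{t}{1-\alpha_{\max}}s_b$, which is $\le s_b$ whenever a null coordinate is maximal.
\item When $\alpha_d$ is maximal, the target equals $D_t(G_b\|F_b)\le D(G_b\|F_b)<bc$.
\end{itemize}

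These are exactly the bounds the paper uses. With them your three-region argument goes through, provided you also choose $\epsilon<\min_k\rho_k$ and not only $\epsilon<r_0-c$.
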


\begin{proof}
The proof is divided according to the three kinds of inequalities in
Theorem~\ref{thm:ffht}.

\smallskip
\noindent\emph{The $A_-$ region for \eqref{eq:ffht-alpha}.}
Fix $\alpha\in A_-\setminus\{e_1,\ldots,e_d\}$.  There is a unique index $k$
such that $\lambda:=\alpha_k>1$.  For $j\neq k$, set
\[
  w_j=-\frac{\alpha_j}{\lambda-1}.
\]
Then $w_j\geq0$ and $\sum_{j\neq k}w_j=1$.  With
\[
  Z_{k,w}(x)=\sum_{j\neq k}w_j
  \log\frac{\mu_k(x)}{\mu_j(x)},
\]
Jensen's inequality gives
\begin{equation}\label{eq:amin-source}
 \mathsf D_\alpha(\boldsymbol\mu)
 =\frac{1}{\lambda-1}\log
   \E_{\mu_k}e^{(\lambda-1)Z_{k,w}}
 \geq \E_{\mu_k}Z_{k,w}
 \geq\rho_k.
\end{equation}
If $k=d$, then $\alpha_d=\lambda>1$ is the exponent attached to the last
target distribution $G_b$, whereas every $\alpha_j$, $j<d$, is attached to
$F_b$.  Since all the first $d-1$ target distributions coincide and
$\sum_{j<d}\alpha_j=1-\lambda$, we obtain
\[
  \mathsf D_\alpha(\boldsymbol R_b)
  =D_\lambda(G_b\|F_b)
  \leq D_\infty(G_b\|F_b)<bc<br_0
  \leq\mathsf D_\alpha(\boldsymbol\mu^{\otimes b}).
\]

The equality follows from \eqref{eq:matrix-alpha} and the definition of $\boldsymbol R_b$, the first inequality from the monotonicity of R'enyi divergence in its order, and the bound $D_\infty(G_b|F_b)<bc$ from \eqref{eq:forward-dinf} in Lemma~\ref{lem:binary-bounds}; finally, the choice $c<r_0$, the identity $r_0=\rho_d$ in \eqref{eq:rho-k}, \eqref{eq:amin-source}, and additivity under tensor products give $bc<b r_0\leq\mathsf D_\alpha(\boldsymbol\mu^{\otimes b})$.

If $k<d$, put $u=-\alpha_d\in[0,\lambda-1]$. Only this weight on the last column matters. Indeed, since the first $d-1$ target columns are all equal to $F_b$, their exponents enter only through their sum $\sum_{j<d}\alpha_j=1+u$, while the last column $G_b$ has exponent $\alpha_d=-u$. We then have \[ \mathsf D_\alpha(\boldsymbol R_b) =\begin{cases} \dfrac{u}{\lambda-1}D_{1+u}(F_b\|G_b),&u>0,\\[1ex] 0,&u=0, \end{cases} \leq D_\infty(F_b\|G_b)<s_b. \] 

(The first inequality uses $u/(\lambda-1)\leq1$ and the monotonicity of R'enyi divergence in its order, while the second follows from \eqref{eq:reverse-dinf} in Lemma~\ref{lem:binary-bounds}.)

Since $s_b/b\to0$ by \eqref{eq:two-scales} and $\rho_k>0$ by
\eqref{eq:rho-k}, we have $s_b<b\rho_k$ for all sufficiently large $b$.
Moreover, \eqref{eq:amin-source} and additivity under tensor products imply
\[
  \mathsf D_\alpha(\boldsymbol\mu^{\otimes b})
  \geq b\rho_k.
\]
Combining these facts with the preceding bound
$\mathsf D_\alpha(\boldsymbol R_b)<s_b$ yields
\[
  \mathsf D_\alpha(\boldsymbol R_b)
  <\mathsf D_\alpha(\boldsymbol\mu^{\otimes b})
\]
for all sufficiently large $b$.  These bounds are independent of $\alpha$
for each fixed $k$.  Since there are only finitely many possible indices
$k$, and the case $k=d$ was handled above, a common sufficiently large $b$
works for all
$\alpha\in A_-\setminus\{e_1,\ldots,e_d\}$.

\smallskip
\noindent\emph{The $A_+$ region for \eqref{eq:ffht-alpha}.}
For each $k$, let $\mathcal C_k$ be the set of pairs $(\varepsilon,w)$
such that
\begin{align*}
 &0\leq\varepsilon\leq1-1/d,
 \qquad w\in\Delta(\{1,\ldots,d\}\setminus\{k\}),\\
 &\varepsilon w_j\leq1-\varepsilon\quad\text{for every }j\neq k.
\end{align*}
Here $\Delta(I)$ denotes the probability simplex on the finite set $I$.
The set $\mathcal C_k$ is compact.  Define
\begin{equation}\label{eq:phi-definition}
 \Phi_k(\varepsilon,w)=
 \begin{cases}
 -\dfrac{1}{\varepsilon}
  \log\E_{\mu_k}e^{-\varepsilon Z_{k,w}},&\varepsilon>0,\\[2ex]
 \displaystyle\sum_{j\neq k}w_jD(\mu_k\|\mu_j),&\varepsilon=0.
 \end{cases}
\end{equation}

Since the alphabet $\mathcal A$ is finite and every probability vector $\mu_j$
has full support, there exists a constant $M<\infty$ such that
\begin{equation*}
 \left|\log\frac{\mu_k(x)}{\mu_j(x)}\right|\leq M
\end{equation*}
for every $x\in\mathcal A$ and every $j\neq k$. Since $w$ is a probability
vector, it follows that
\begin{equation*}
 |Z_{k,w}(x)|
 \leq
 \sum_{j\neq k}w_j
 \left|\log\frac{\mu_k(x)}{\mu_j(x)}\right|
 \leq M
\end{equation*}
uniformly over $x$ and $w$. Consequently, as $\varepsilon\downarrow0$,
the Taylor expansion
\begin{equation*}
 \log\E_{\mu_k}e^{-\varepsilon Z_{k,w}}
 =
 -\varepsilon\E_{\mu_k}Z_{k,w}
 +O(\varepsilon^2)
\end{equation*}
holds with a remainder uniform over the simplex of $w$. Moreover,
\begin{equation*}
 \E_{\mu_k}Z_{k,w}
 =
 \sum_{j\neq k}w_j
 \E_{\mu_k}\log\frac{\mu_k}{\mu_j}
 =
 \sum_{j\neq k}w_jD(\mu_k\|\mu_j).
\end{equation*}
Therefore,
\begin{equation*}
 -\frac{1}{\varepsilon}
 \log\E_{\mu_k}e^{-\varepsilon Z_{k,w}}
 =
 \sum_{j\neq k}w_jD(\mu_k\|\mu_j)
 +O(\varepsilon)
\end{equation*}
uniformly in $w$. Thus the value assigned to $\Phi_k(0,w)$ in
\eqref{eq:phi-definition} is precisely the continuous extension of the
expression for $\varepsilon>0$. Since continuity for $\varepsilon>0$
follows directly from the finite-sum representation, $\Phi_k$ is continuous
on $\mathcal C_k$.

At $\varepsilon=0$, the definition of $\rho_k$ in \eqref{eq:rho-k} gives
\begin{equation*}
 \Phi_k(0,w)
 =
 \sum_{j\neq k}w_jD(\mu_k\|\mu_j)
 \geq
 \rho_k>0.
\end{equation*}
Now suppose that $\varepsilon>0$. The exponents $1-\varepsilon$ and
$\varepsilon w_j$, $j\neq k$, are nonnegative and sum to one. Generalized
H\"older's inequality therefore gives
\begin{equation*}
 \sum_x\mu_k(x)^{1-\varepsilon}
       \prod_{j\neq k}\mu_j(x)^{\varepsilon w_j}
 \leq
 \left(\sum_x\mu_k(x)\right)^{1-\varepsilon}
 \prod_{j\neq k}
 \left(\sum_x\mu_j(x)\right)^{\varepsilon w_j}
 =1.
\end{equation*}
This inequality is strict. Indeed, the definition of $\mathcal C_k$ implies
that
\begin{equation*}
 1-\varepsilon\geq\frac{1}{d}>0,
\end{equation*}
and at least one $w_j$ is positive because
$\sum_{j\neq k}w_j=1$. Equality in generalized H\"older's inequality would
therefore require $\mu_k$ to be proportional to every $\mu_j$ for which
$w_j>0$. Since these are probability vectors, proportionality would imply
$\mu_k=\mu_j$, contradicting the assumed pairwise distinctness of
$\mu_1,\ldots,\mu_d$.
Consequently,
\begin{equation*}
 \E_{\mu_k}e^{-\varepsilon Z_{k,w}}
 =
 \sum_x\mu_k(x)^{1-\varepsilon}
       \prod_{j\neq k}\mu_j(x)^{\varepsilon w_j}
 <1,
\end{equation*}
and hence $\Phi_k(\varepsilon,w)>0$.

Thus $\Phi_k$ is continuous and strictly positive on the compact set
$\mathcal C_k$, so it attains a strictly positive minimum there. Since there
are only finitely many indices $k$, we obtain
\begin{equation}\label{eq:eta-positive}
 \eta:=
 \min_{1\leq k\leq d}
 \min_{(\varepsilon,w)\in\mathcal C_k}
 \Phi_k(\varepsilon,w)>0.
\end{equation}

Now take $\alpha\in A_+\setminus\{e_1,\ldots,e_d\}$ and choose a coordinate
attaining $\alpha_{\max}$.  If a null coordinate $k<d$ attains the maximum,
write $\varepsilon=1-\alpha_k$ and $t=\alpha_d$.  Then $0\leq t\leq\varepsilon$.
With
\[
 H_b(t)=\sum_yF_b(y)^{1-t}G_b(y)^t,
\]
we have
\[
 \mathsf D_\alpha(\boldsymbol R_b)
 =-\frac{1}{\varepsilon}\log H_b(t).
\]
The function $h_b(t)=\log H_b(t)$ is convex,
$h_b(0)=0$ and $h_b'(0)=-D(F_b\|G_b)$.  Its tangent inequality therefore
implies
\begin{equation}\label{eq:aplus-null-target}
 \mathsf D_\alpha(\boldsymbol R_b)
 \leq\frac{t}{\varepsilon}D(F_b\|G_b)<s_b.
\end{equation}
On the other hand, the parametrization
$\alpha_k=1-\varepsilon$ and
$\alpha_j=\varepsilon w_j$, $j\neq k$, gives
\begin{align}
\label{repara_modification}
 \mathsf D_\alpha(\boldsymbol\mu)
 =
 \Phi_k(\varepsilon,w)
 \geq\eta.
\end{align}
By additivity under tensor products,
\[
 \mathsf D_\alpha(\boldsymbol\mu^{\otimes b})
 \geq b\eta.
\]
Since $s_b/b\to0$ by \eqref{eq:two-scales} and $\eta>0$ by
\eqref{eq:eta-positive}, we have $s_b<b\eta$ for all sufficiently large
$b$. Combining this with \eqref{eq:aplus-null-target} yields
\[
 \mathsf D_\alpha(\boldsymbol R_b)
 <
 \mathsf D_\alpha(\boldsymbol\mu^{\otimes b})
\]
for all sufficiently large $b$, uniformly over all $\alpha$ for which a
null coordinate attains $\alpha_{\max}$.

It remains to consider the case in which the alternative coordinate $d$ is
the unique maximizer.  Put $t=\alpha_d$, $\varepsilon=1-t$, and
$w_i=\alpha_i/\varepsilon$.  Since $\alpha\neq e_d$, we have
$\varepsilon>0$.  Then
\begin{equation}\label{eq:aplus-q-identities}
 \mathsf D_\alpha(\boldsymbol R_b)=D_t(G_b\|F_b),
 \qquad
 \mathsf D_\alpha(\boldsymbol\mu)=\Phi_d(\varepsilon,w).
\end{equation}
Choose $c_1\in(c,r_0)$.  Uniform continuity at $\varepsilon=0$ and
\[
 \Phi_d(0,w)=\sum_{i=1}^Lw_iD(q\|\mu_i)\geq r_0
\]
give a $\kappa>0$ such that
$\Phi_d(\varepsilon,w)\geq c_1$ whenever
$\varepsilon\leq\kappa$.  In that region, Jensen's inequality gives
\[
 \mathsf D_\alpha(\boldsymbol R_b)
 =D_t(G_b\|F_b)<bc<bc_1
 \leq b\Phi_d(\varepsilon,w)
 =\mathsf D_\alpha(\boldsymbol\mu^{\otimes b}),
\]
where the monotonicity of R\'enyi divergence in its order gives
$D_t(G_b\|F_b)\leq D(G_b\|F_b)$ for $t<1$, and
Lemma~\ref{lem:binary-bounds} bounds the latter by $bc$.

When $\varepsilon\geq\kappa$, retaining only the second term in the binary
Hellinger sum gives
\begin{align}
 D_t(G_b\|F_b)
 &=-\frac{1}{1-t}\log
    \sum_yG_b(y)^tF_b(y)^{1-t}\notag\\
 &\leq-\log(1-a_b)+\frac{t}{1-t}s_b\notag\\
 &\leq-\log(1-a_b)+\frac{1-\kappa}{\kappa}s_b=o(b),
 \label{eq:aplus-away}
\end{align}
uniformly over the region. 

By \eqref{repara_modification} and additivity under tensor products,
\[
 \mathsf D_\alpha(\boldsymbol\mu^{\otimes b})
 =
 b\mathsf D_\alpha(\boldsymbol\mu)
 \geq b\eta.
\]
Combining this lower bound with the uniform $o(b)$ upper bound in
\eqref{eq:aplus-away}, we obtain
\[
 \mathsf D_\alpha(\boldsymbol R_b)
 <
 b\eta
 \leq
 \mathsf D_\alpha(\boldsymbol\mu^{\otimes b})
\]
for all sufficiently large $b$, uniformly over the region
$\varepsilon\geq\kappa$ in which $\alpha_d$ is the unique maximizer. This
completes the verification of \eqref{eq:ffht-alpha} over the $A_+$ region.
If $\alpha_d$ is tied with a null coordinate $k<d$ for the maximum, then
that null coordinate also attains $\alpha_{\max}$, so this case was already
covered by the preceding null-maximum argument.

\smallskip
\noindent\emph{Proof of \eqref{eq:ffht-trop}}
The tropical divergence is invariant under positive rescaling of $\beta$, so
we may normalize $\beta_{\max}=1$.  There is then a unique positive
coordinate $\beta_k=1$; put $w_j=-\beta_j$ for $j\neq k$.  Then
\begin{equation}\label{eq:trop-source}
 \mathsf D^{\mathbb T}_\beta(\boldsymbol\mu)
 =\max_x Z_{k,w}(x)
 \geq\E_{\mu_k}Z_{k,w}
 \geq\rho_k.
\end{equation}
For the target experiment $\boldsymbol R_b$, 
\eqref{eq:forward-dinf} and \eqref{eq:reverse-dinf} give
\[
 \mathsf D^{\mathbb T}_\beta(\boldsymbol R_b)
 =\begin{cases}
   D_\infty(G_b\|F_b)<bc,&k=d,\\
   w_dD_\infty(F_b\|G_b)<s_b,&k<d.
 \end{cases}
\]

Combining the preceding bounds for $\boldsymbol R_b$ with
\eqref{eq:trop-source} and additivity under tensor products, and using the
choice $c<r_0$, the identity $r_0=\rho_d$ in \eqref{eq:rho-k}, and
$s_b/b\to0$ from \eqref{eq:two-scales}, we obtain a common $b_0$ such that
\begin{equation*}
  \mathsf D^{\mathbb T}_\beta(\boldsymbol\mu^{\otimes b})
  >
  \mathsf D^{\mathbb T}_\beta(\boldsymbol R_b)
\end{equation*}
for every $b\geq b_0$ and every $\beta\in B_-\setminus\{0\}$.  This verifies
the strict tropical inequality \eqref{eq:ffht-trop}.

\smallskip
\noindent\emph{Proof of \eqref{eq:ffht-kl}.}
If $k,j<d$, the corresponding target columns coincide, and hence
\[
 D((\boldsymbol R_b)_k\|(\boldsymbol R_b)_j)=0
 <bD(\mu_k\|\mu_j)= 
 D(\mu_k^{\otimes b}\|\mu_j^{\otimes b}).
\]
For $i<d$, Lemma~\ref{lem:binary-bounds} gives
\[
 D(G_b\|F_b)<bc<br_0\leq bD(q\|\mu_i)=D(q^{\otimes b}|\mu_i^{\otimes b})
\]
and, for all large $b$ (since $\frac{s_b}{b} \to 0 $),
\[
 D(F_b\|G_b)<s_b<bD(\mu_i\|q) =D(\mu_i^{\otimes b}\|q^{\otimes b}).
\]
Thus all ordered KL inequalities are strict.

We have verified every condition of Theorem~\ref{thm:ffht} for all large $b$.
Applying that theorem to source $\boldsymbol\mu^{\otimes b}$, target
$\boldsymbol R_b$, and repetition number $m$ gives
\eqref{eq:finite-conversion}, because
$(\mu_j^{\otimes b})^{\otimes m}=\mu_j^{\otimes bm}$.
\end{proof}

\begin{remark}[Randomization at the finite stage]
Proposition~\ref{prop:finite-conversion} produces a common Markov kernel.  It
does not assert that a deterministic transformation exists on the discrete
source alphabet.  The deterministic exact-pivotal statistic will be obtained
only after returning to the jointly atomless original experiment in
Section~\ref{sec:general}.
\end{remark}

\section{Reduction and purification}
\label{sec:general}

We first reduce an arbitrary finite family of measures to a pairwise distinct
full-support finite experiment without losing a prescribed strict KL margin.

\begin{lemma}[Finite full-support coarsening]\label{lem:coarsening}
Let $S_0,S_1,\ldots,S_L$ be pairwise distinct probability measures on the
Polish space $\cX$ and let
\[
 c<\min_{1\leq i\leq L}D(S_0\|S_i).
\]
There exist a finite alphabet $\mathcal A$ and a Markov kernel
$K:\cX\rightsquigarrow\mathcal A$ such that the probability vectors
$s_j=K_{\#}S_j$, $0\leq j\leq L$, are pairwise distinct, have full support,
and satisfy
\[
 \min_{1\leq i\leq L}D(s_0\|s_i)>c.
\]
\end{lemma}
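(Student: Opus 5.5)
The plan is to build $K$ in two stages: a deterministic finite partition that separates the measures and already carries KL margin above $c$, followed by a small uniform randomization that makes every coordinate strictly positive. The tools are the variational (Dobrushin / Gelfand--Yaglom--Perez) characterization of relative entropy as a supremum over finite measurable partitions, monotonicity of KL under refinement (data processing), and lower semicontinuity of KL on a finite alphabet.

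\emph{Step 1 (KL margin).} Fix $i\in\{1,\ldots,L\}$. Since $D(S_0\|S_i)>c$, possibly $D(S_0\|S_i)=+\infty$, the partition characterization
\[
 D(S_0\|S_i)=\sup_{\pi}\sum_{B\in\pi}S_0(B)\log\frac{S_0(B)}{S_i(B)}
\]
gives a finite Borel partition $\pi_i$ whose discretized divergence exceeds $c$. The supremum runs over finite Borel partitions, with conventions $0\log(0/\cdot)=0$ and $a\log(a/0)=+\infty$ for $a>0$. In the infinite case, either some cell has $S_0(B)>0=S_i(B)$, or the sum is arbitrarily large. Either way the discretized divergence exceeds $c$.

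\emph{Step 2 (distinctness).} For each ordered pair $j\neq k$, pairwise distinctness gives a Borel set $B_{jk}$ with $S_j(B_{jk})\neq S_k(B_{jk})$. Let $\pi$ be the common refinement of $\pi_1,\ldots,\pi_L$ and all two-cell partitions $\{B_{jk},B_{jk}^c\}$. Refinement is a finite intersection, so $\pi$ is still a finite Borel partition. Discard cells that are null for every $S_j$; this does not affect any of the divergences. Let $\mathcal A=\pi$ and let $p_j$ be the law of the cell index under $S_j$.

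By data processing (coarsening $\pi$ to $\pi_i$), $D(p_0\|p_i)>c$ for every $i$. Because each $B_{jk}$ is a union of cells of $\pi$, the vectors $p_j$ are pairwise distinct. The $p_j$ may still have zero entries.

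\emph{Step 3 (full support).} For $\theta\in(0,1)$, define the kernel
\[
 K_\theta(x,\cdot)=(1-\theta)\,\delta_{\text{cell}(x)}+\theta\,u,
\]
where $u$ is uniform on $\mathcal A$. Then $s_j^\theta=(1-\theta)p_j+\theta u$ has full support. The map $p\mapsto(1-\theta)p+\theta u$ is injective, so the $s_j^\theta$ remain pairwise distinct. As $\theta\downarrow0$, $s_j^\theta\to p_j$. By joint lower semicontinuity of $D$ on the finite simplex (this includes the case $D(p_0\|p_i)=+\infty$),
\[
 \liminf_{\theta\downarrow0}D(s_0^\theta\|s_i^\theta)\geq D(p_0\|p_i)>c.
\]
Since there are only finitely many $i$, a single small $\theta$ works for all of them, and $K=K_\theta$ is the required kernel.

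The only delicate point is Step 1 when $D(S_0\|S_i)=\infty$, including the case $S_0\not\ll S_i$. The partition formula handles it, but it must be applied with the stated conventions. Step 3 then rests on lower semicontinuity rather than continuity, because the limit divergence may be infinite.
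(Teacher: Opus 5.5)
Your proposal is correct and follows the paper's proof essentially step for step: the finite-partition representation of relative entropy, a common refinement with two-cell separating partitions, and then mixing with the uniform vector. Two of your choices differ only cosmetically: you use joint lower semicontinuity where the paper uses term-by-term extended-real convergence, and you work with the margin $c$ directly rather than an auxiliary $c'$. One small slip is that discarding the cells null for every $S_j$ leaves $\mathrm{cell}(x)$ undefined there, so $K$ is not defined on all of $\cX$; the step is also unnecessary, because the uniform mixing already gives full support, and merging such cells into a retained cell fixes it.
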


\begin{proof}
Choose $c'>c$ below $\min_iD(S_0\|S_i)$.  We use the finite-partition
representation \cite[Section~7.1, equation~(7.1), and Lemma~7.4]{Gray2011}
\begin{equation}\label{eq:partition-kl}
 D(R\|S)=\sup_{\pi}D(R_\pi\|S_\pi),
\end{equation}
where the supremum is over finite measurable partitions. 

For each $i$, choose a finite partition $\pi_i$ whose discrete divergence is
larger than $c'$.  For every pair $j\neq k$, choose a measurable set on which
$S_j$ and $S_k$ differ.  Let $\pi=\{A_1,\ldots,A_M\}$ be the common refinement
of all these partitions and binary separating partitions.  Refinement cannot
decrease relative entropy, and the vectors
\[
 s_j^\pi=(S_j(A_1),\ldots,S_j(A_M))
\]
are pairwise distinct.

Let $u$ be uniform on $\{1,\ldots,M\}$ and set
\[
 s_j^{\pi,\varepsilon}=(1-\varepsilon)s_j^\pi+\varepsilon u.
\]
These vectors have full support and remain distinct because their pairwise
differences are multiplied by $1-\varepsilon$.  Moreover,
\[
 D(s_0^{\pi,\varepsilon}\|s_i^{\pi,\varepsilon})
 \longrightarrow D(s_0^\pi\|s_i^\pi)
\]
as $\varepsilon\downarrow0$, in the extended-real sense.  (This is immediate
term by term; if a positive numerator meets a zero denominator, the
corresponding term diverges to infinity.)  A common sufficiently small
$\varepsilon$ preserves all strict inequalities.  Finally, take
\[
 K(x,\{a\})=(1-\varepsilon)\one_{\{x\in A_a\}}
             +\frac{\varepsilon}{M},
 \qquad a=1,\ldots,M.
\] finishes the proof.
\end{proof}

\begin{lemma}[Joint atomlessness under products]\label{lem:ja-products}
If $(S_1,\ldots,S_d)$ is a jointly atomless tuple on $\cX$, then
$(S_1^{\otimes N},\ldots,S_d^{\otimes N})$ is jointly atomless for every
$N\geq1$.
\end{lemma}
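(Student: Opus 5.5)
The plan is to build the witness for the product tuple directly from the witness for the original tuple. By Definition~\ref{def:ja}, there is a dominating probability measure $\mu$ on $\cX$ and an atomless real random variable $\xi$ on $(\cX,\mu)$ that is independent of the density vector $f=(dS_1/d\mu,\ldots,dS_d/d\mu)$. On $\cX^N$ we take the dominating measure $\mu^{\otimes N}$. It dominates each $S_j^{\otimes N}$, with density
\[
 \frac{dS_j^{\otimes N}}{d\mu^{\otimes N}}(x_1,\ldots,x_N)=\prod_{m=1}^N f_j(x_m).
\]
This identity is the standard product of Radon--Nikodym derivatives, justified by Fubini on rectangles and the $\pi$--$\lambda$ theorem. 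For the new witness we use $\xi^{(N)}(x_1,\ldots,x_N):=\xi(x_1)$, a function of the first coordinate only.

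The key step is the independence claim. Under $\mu^{\otimes N}$ the coordinates $x_1,\ldots,x_N$ are independent, so the pair $(\xi(x_1),f(x_1))$ is independent of $(f(x_2),\ldots,f(x_N))$. By hypothesis, $\xi(x_1)$ is also independent of $f(x_1)$. Together these show that the three blocks $\xi(x_1)$, $f(x_1)$ and $(f(x_m))_{m\ge2}$ are mutually independent. One checks this on product events: the probability $\mu^{\otimes N}(\xi(x_1)\in A,\ f(x_1)\in B,\ (f(x_m))_{m\ge 2}\in C)$ factors first across coordinates and then within the first coordinate.

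It then follows that $\xi(x_1)$ is independent of the whole vector $(f(x_1),\ldots,f(x_N))$. The product densities $\prod_m f_j(x_m)$ are a measurable function of that vector, so $\xi^{(N)}$ is independent of the density tuple of $(S_1^{\otimes N},\ldots,S_d^{\otimes N})$.

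Atomlessness of $\xi^{(N)}$ under $\mu^{\otimes N}$ is immediate, since its law equals the law of $\xi$ under $\mu$. No step is a real obstacle. The only care needed is that the density in Definition~\ref{def:ja} is fixed only $\mu$-a.e. Any version of $dS_j^{\otimes N}/d\mu^{\otimes N}$ agrees a.e.\ with the product formula, and independence is unaffected by null modifications. This gives the lemma.
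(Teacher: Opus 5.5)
Your proposal is correct and is essentially the paper's proof: the same witness $\mu^{\otimes N}$ with $\xi(x_1)$, the same product-density formula, and the same two-step independence argument (first coordinate against the remaining coordinates, then $\xi(x_1)$ against $f(x_1)$). Your check on product events and your remark on $\mu$-a.e.\ versions of the densities fill in details the paper leaves implicit.
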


\begin{proof}
Let $\mu$ and $\xi$ witness joint atomlessness and write
$r_j=dS_j/d\mu$.  The measure $\mu^{\otimes N}$ dominates every
$S_j^{\otimes N}$, with density
\[
  \frac{dS_j^{\otimes N}}{d\mu^{\otimes N}}(x_1,\ldots,x_N)
  =\prod_{t=1}^Nr_j(x_t).
\]
Under $\mu^{\otimes N}$, the variable $\xi(x_1)$ remains atomless and is
independent of
\[
 (r_1(x_1),\ldots,r_d(x_1))
\]
by the joint atomlessness assumption. Moreover, the remaining coordinates
$(x_2,\ldots,x_N)$ are independent of
\[
 \bigl(\xi(x_1),(r_1(x_1),\ldots,r_d(x_1))\bigr)
\]
under the product measure $\mu^{\otimes N}$.  Hence $\xi(x_1)$ is
independent of
\[
 \bigl((r_j(x_t))_{j=1}^d\bigr)_{t=1}^N
\]
and therefore of the displayed vector of product densities.
\end{proof}

We next record the exact consequence of simultaneous transport used below.

\begin{lemma}[Simultaneous purification]\label{lem:purification}
Let $\boldsymbol S=(S_1,\ldots,S_d)$ be a jointly atomless tuple on a Polish
space $\cX$, and let $\boldsymbol H=(H_1,\ldots,H_d)$ be a tuple on a Polish
space $\cY$.  If one Markov kernel sends every $S_j$ to $H_j$, then one
measurable map $T:\cX\to\cY$ sends every $S_j$ to $H_j$.
\end{lemma}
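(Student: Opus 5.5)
The plan is to realize the Markov kernel as a deterministic function of the likelihood-ratio vector together with the independent atomless variable $\xi$ from Definition~\ref{def:ja}. Since the density vector is sufficient for $\boldsymbol S$, the kernel can be replaced by one that depends on $x$ only through that vector, without changing any of the images. Let $\mu$ and $\xi$ witness joint atomlessness, fix measurable versions $r_j=dS_j/d\mu$, write $R(x)=(r_1(x),\ldots,r_d(x))\in\R^d$, and let $K$ be the given kernel with $K_{\#}S_j=H_j$ for all $j$.

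\emph{Step 1 (reduce to a kernel on the sufficient statistic).} Since $\cY$ is Polish, conditional distributions exist. Take a Markov kernel $\bar K:\R^d\rightsquigarrow\cY$ with $\bar K(R(x),B)=\E_\mu[K(X,B)\mid R](x)$ for $\mu$-a.e.\ $x$ and every Borel $B$. One way to get it is to disintegrate the joint law of $(R(X),Y)$ under $\mu$ with $Y\sim K(X,\cdot)$. Because each $r_j$ is a function of $R$, the tower property gives
\[
H_j(B)=\E_\mu[r_j(X)K(X,B)]=\E_\mu\bigl[r_j(X)\bar K(R(X),B)\bigr].
\]

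\emph{Step 2 (uniformize the auxiliary variable).} Let $F$ be the distribution function of $\xi$ under $\mu$. Since $\xi$ is atomless, $F$ is continuous, so $U:=F(\xi)$ is uniform on $[0,1]$ under $\mu$. As a measurable function of $\xi$, $U$ is still independent of $R$.

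\emph{Step 3 (randomization lemma).} By the standard randomization/transfer lemma (e.g.\ Kallenberg, Lemma~3.22) for a kernel into a Polish (hence Borel) space, there is a jointly measurable $g:\R^d\times[0,1]\to\cY$ such that, for every $\rho$, the law of $g(\rho,V)$ with $V$ uniform is $\bar K(\rho,\cdot)$. Concretely, embed $\cY$ Borel-isomorphically into $[0,1]$ and use the conditional quantile function of $\bar K(\rho,\cdot)$; this quantile function is jointly measurable in $(\rho,v)$.

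\emph{Step 4 (verification).} Define $T(x)=g(R(x),U(x))$, which is measurable. For Borel $B\subseteq\cY$, independence of $U$ and $R$ under $\mu$ together with Fubini gives
\[
S_j(T\in B)=\E_\mu\bigl[r_j(X)\one\{g(R,U)\in B\}\bigr]=\E_\mu\bigl[r_j(X)\bar K(R(X),B)\bigr]=H_j(B).
\]
So $T_{\#}S_j=H_j$ for every $j$ simultaneously.

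The main obstacle is measure-theoretic bookkeeping rather than any inequality. One needs a jointly measurable version of the conditional kernel $\bar K$ and of the quantile map $g$. One also needs the independence of $\xi$ from $R$ to be used under $\mu$, not under the $S_j$. The latter is handled by writing every $S_j$-probability as a $\mu$-expectation weighted by $r_j(X)$, which depends only on $R$.
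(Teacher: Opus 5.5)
Your proof is correct, but it takes a different route from the paper. The paper's proof is two lines citing Proposition~2.3 of \citet{zhang2024existence}. Part~(i) turns the simultaneous kernel into a multivariate convex-order relation between the likelihood-ratio vectors $(dS_j/d\overline S)_j$ under $\overline S$ and $(dH_j/d\overline H)_j$ under $\overline H$. Part~(ii), using joint atomlessness, turns that convex-order relation back into a simultaneous deterministic map.

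You bypass convex order entirely and derandomize the kernel directly. Sufficiency of $R=(r_1,\ldots,r_d)$ lets you replace $K$ by $\bar K(R(x),\cdot)$ without changing any image. The randomization lemma then realizes $\bar K$ as $g(R,U)$ with the internal seed $U=F(\xi)$.

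You correctly see why Step~1 cannot be skipped. $U$ is itself a function of $x$, so it is independent of $R$ but not of $X$. Factoring through $R$ is exactly what makes the Fubini computation in Step~4 legitimate. Weighting by $r_j(X)$, which is also a function of $R$, is what transfers independence under $\mu$ to each $S_j$.

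The paper's route is shorter and places the lemma inside a general characterization that is useful when one has only the convex-order relation rather than an explicit kernel. It is, however, a black box resting on the hypotheses of the cited result. Your argument is self-contained and uses only regular conditional distributions and the standard randomization lemma. It also works directly with the witness $\mu$ of Definition~\ref{def:ja} rather than the mixture normalization, and it makes transparent where joint atomlessness enters. Finally, it never uses that $\cX$ is Polish, only that $\cY$ is a Borel space.
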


\begin{proof}
Set $\overline S=d^{-1}\sum_{j=1}^dS_j$ and
$\overline H=d^{-1}\sum_{j=1}^dH_j$.  Each $S_j$ is absolutely continuous
with respect to $\overline S$, and each $H_j$ is absolutely continuous with
respect to $\overline H$.  By Proposition~2.3(i) of
\citet{zhang2024existence}, the assumed simultaneous kernel is equivalent to
the convex-order relation
\[
 \left(\frac{dS_1}{d\overline S},\ldots,
       \frac{dS_d}{d\overline S}\right)\Bigg|_{\overline S}
 \succeq_{\mathrm{cx}}
 \left(\frac{dH_1}{d\overline H},\ldots,
       \frac{dH_d}{d\overline H}\right)\Bigg|_{\overline H}.
\]
Here the vertical bar denotes the law under the indicated mixture, and
$\succeq_{\mathrm{cx}}$ denotes domination in multivariate convex order.
Because $\boldsymbol S$ is jointly atomless, Proposition~2.3(ii) of the same
paper says that this convex-order relation is also equivalent to the
existence of a simultaneous measurable map.  
\end{proof}

Now we are ready to prove the main theorem.

\begin{proof}[Proof of Theorem~\ref{thm:main}]
Deleting duplicate nulls changes neither the exactness nor the pivotality
constraints, and every subtuple of a jointly atomless tuple is jointly
atomless.  We may therefore assume that
$P_1,\ldots,P_L,Q$ are pairwise distinct.  Put
\[
 R=\min_{1\leq i\leq L}D(Q\|P_i).
\]
Since relative entropy vanishes only for identical probability measures,
$R\in(0,+\infty]$.  Fix a finite $r$ with $0<r<R$, and choose a finite $c$
with $r<c<R$.  Apply
Lemma~\ref{lem:coarsening} with $S_0=Q$ and $S_i=P_i$.  We obtain one
single-observation kernel $M$ and pairwise distinct full-support vectors
\[
 q=M_{\#}Q,\qquad p_i=M_{\#}P_i,
 \qquad \min_iD(q\|p_i)>c.
\]

Apply the construction of Section~\ref{sec:binary} to
$(p_1,\ldots,p_L,q)$ with the above $c$ and, for definiteness,
$s_b=\sqrt b$.  Proposition~\ref{prop:finite-conversion} applies.  Choose a
sufficiently large $b$ so that all spectral conditions hold and, by
\eqref{eq:binary-rate},
\begin{equation}\label{eq:choose-b}
 \frac{1}{b}D(G_b\|F_b)>r.
\end{equation}
For any $m\geq m_0(b)$, compose the product kernel $M^{\otimes bm}$ with the
kernel in \eqref{eq:finite-conversion}.  This gives one simultaneous kernel
from the original experiment satisfying
\begin{equation}\label{eq:composed-kernel}
 P_i^{\otimes bm}\longmapsto F_b^{\otimes m}\quad(1\leq i\leq L),
 \qquad
 Q^{\otimes bm}\longmapsto G_b^{\otimes m}.
\end{equation}

By Lemma~\ref{lem:ja-products}, the source tuple in
\eqref{eq:composed-kernel} is jointly atomless.  Lemma~\ref{lem:purification}
therefore provides a statistic
$Y_{b,m}:\cX^{bm}\to\{0,1\}^m$ having exactly the target laws in
\eqref{eq:composed-kernel}.  Define
\begin{equation}\label{eq:constructed-e}
 E_{b,m}:=
 \frac{dG_b^{\otimes m}}{dF_b^{\otimes m}}(Y_{b,m}).
\end{equation}
Under every null, $Y_{b,m}$ has the same law $F_b^{\otimes m}$, and hence
$E_{b,m}$ is pivotal.  It is exact because
\[
 \E_{P_i^{\otimes bm}}E_{b,m}
 =\int\frac{dG_b^{\otimes m}}{dF_b^{\otimes m}}\,dF_b^{\otimes m}=1.
\]
Under the alternative,
\[
 \E_{Q^{\otimes bm}}\log E_{b,m}
 =D(G_b^{\otimes m}\|F_b^{\otimes m})
 =mD(G_b\|F_b).
\]
Together with \eqref{eq:choose-b}, this proves
\begin{equation}\label{eq:block-lower}
 \frac{\ell_{bm}}{bm}>r.
\end{equation}

For completeness, $\ell_n$ is superadditive without any additional
assumption.  Indeed, let $E_n$ and $E_m$ be exact and pivotal e-variables based on
independent blocks of $n$ and $m$ observations, respectively, then we define
\[
 E_{n+m}(x_1,\ldots,x_{n+m})
 :=E_n(x_1,\ldots,x_n)E_m(x_{n+1},\ldots,x_{n+m}),
\]
this is exact because the two factors are independent under each null, and is
pivotal because its null law is the product of two laws that do not depend on
the null index.  For finite $u<\ell_n$ and $v<\ell_m$, choose admissible
statistics whose e-powers exceed $u$ and $v$, respectively.  These e-powers
are not $-\infty$, so the negative parts of their log e-variables are
integrable and the alternative expected logarithms can be added safely.  Hence
$\ell_{n+m}>u+v$.  Letting $u\uparrow\ell_n$ and $v\uparrow\ell_m$ proves
$\ell_{n+m}\geq\ell_n+\ell_m$, including when one of the suprema is infinite.
Fekete's lemma (see, e.g., Lemma~1.2.1 of
\citet{steele1997probability})) in the extended-real form therefore yields
\[
 \lim_{n\to\infty}\frac{\ell_n}{n}
 =\sup_{n\geq1}\frac{\ell_n}{n}.
\]
Thus the block bound \eqref{eq:block-lower} implies that the limit is at least
$r$.  Since every $r\in(0,R)$ is allowed, the lower bound is $R$. 

For the matching upper bound, let $E$ be any exact and pivotal e-variable
based on $n$ observations.  For every $i\in\{1,\ldots,L\}$, the entropy
variational inequality gives
\[
  \E_{Q^{\otimes n}}\log E
  \leq
  D(Q^{\otimes n}\|P_i^{\otimes n})
  +\log\E_{P_i^{\otimes n}}E.
\]
Since $E$ is exact,
\[
  \E_{P_i^{\otimes n}}E=1.
\]
Moreover, relative entropy is additive under tensor products, so
\[
  D(Q^{\otimes n}\|P_i^{\otimes n})
  =nD(Q\|P_i).
\]
Consequently,
\[
  \E_{Q^{\otimes n}}\log E
  \leq nD(Q\|P_i).
\]
This inequality is understood in the extended-real sense.  If
$D(Q\|P_i)=+\infty$, it is trivial; otherwise, the finiteness of the
divergence implies $Q\ll P_i$.  Thus no absolute continuity assumption is
imposed.

Since the preceding inequality holds for every $i$, taking the minimum over
$i$ and then the supremum over all exact and pivotal e-variables gives
\[
  \ell_n
  \leq
  n\min_{1\leq i\leq L}D(Q\|P_i)
  =nR.
\]
Therefore,
\[
  \limsup_{n\to\infty}\frac{\ell_n}{n}\leq R.
\]
Combining this with the lower bound proves
\[
  \lim_{n\to\infty}\frac{\ell_n}{n}=R.
\]
\end{proof}

\section{Discussion}

Theorem~\ref{thm:main} shows that imposing exactness and pivotality does not
reduce the optimal first-order e-power.  More precisely, the largest expected
logarithmic growth rate attainable by an exact and pivotal e-variable
converges to
$
\min_{1\leq i\leq L}D(Q|P_i).
$

More precisely,
\[
 \lim_{n\to\infty}\frac{\ell_n}{n}
 =
 \min_{1\leq i\leq L}D(Q\|P_i).
\]

This answers in the affirmative the asymptotic question raised after
Proposition~4.11 of \citet{zhang2024existence}.  Moreover, the absolute
continuity assumption $P_i\ll Q$ imposed there is not needed for this
conclusion; joint atomlessness alone suffices.

The limiting rate has a natural information-theoretic interpretation.  If
the null distribution were known to be $P_i$, the optimal expected
logarithmic growth rate against $Q$ would be $D(Q|P_i)$.  Under the finite
composite null, no e-variable can exceed the smallest of these rates.
Theorem~\ref{thm:main} shows that this universal upper bound remains
asymptotically attainable even when the e-variable must be exact under every
null distribution and have the same null distribution for every $P_i$.
Thus exactness and pivotality may restrict finite-sample constructions, but
they do not change the asymptotic information rate.
\footnotesize
\setlength{\bibsep}{1pt}
\bibliographystyle{plainnat}
\bibliography{references}

\end{document}